\newcolumntype{x}{>{\vbox to 5ex\bgroup\vfill\centering\arraybackslash\hspace{0pt}}m{2.2cm}<{\egroup}}
\newtheorem{theorem}{Theorem}[section]
\newtheorem{corollary}[theorem]{Corollary}
\newtheorem{proposition}[theorem]{Proposition}
\DeclareMathOperator{\Aut}{Aut}
\DeclareMathOperator{\LS}{LS}
\DeclareMathOperator{\STS}{STS}
\title{On the upper embedding of symmetric configurations with block size 3}
\author{
Grahame Erskine\thanks{Open University, Milton Keynes, UK}\footnotemark[1]\\ \texttt{\small grahame.erskine@open.ac.uk}
\and Terry Griggs\footnotemark[1]\\ \texttt{\small terry.griggs@open.ac.uk}
\and Jozef \v{S}ir\'a\v{n}\thanks{Open University, Milton Keynes, UK and Slovak University of Technology, Bratislava, Slovakia}\footnotemark[2]\\ \texttt{\small jozef.siran@open.ac.uk}
}
\date{}
\begin{document}
\maketitle
\let\thefootnote\relax\footnote{Mathematics subject classification: 05B30, 05C10}
\let\thefootnote\relax\footnote{Keywords: configuration; upper embedding}

\vspace*{-4ex}
\begin{abstract}
\noindent We consider the problem of embedding a symmetric configuration with block size 3 in an orientable surface in such a way that the blocks of the configuration form triangular faces and there is only one extra large face. We develop a sufficient condition for such an embedding to exist given any orientation of the configuration, and show that this condition is satisfied for all configurations on up to 19 points. We also show that there exists a configuration on 21 points which is not embeddable in any orientation. As a by-product, we give a revised table of numbers of configurations, correcting the published figure for 19 points. We give a number of open questions about embeddability of configurations on larger numbers of points.
\end{abstract}

\section{Introduction}\label{sec:intro}
Previous work~\cite{Grannell2005,Griggs2018,Griggs2019} considered the problem of upper embedding of Steiner triple systems and Latin squares. The main motivation of the current paper is to generalise this idea to the case of symmetric configurations with block size 3.  We begin with the requisite background and definitions and initially work more generally.  Let ${\cal X}=(V,{\cal B})$ be a (partial) triple system on a point set $V$, that is, a collection ${\cal B}$ of $3$-element subsets of $V$, called \emph{blocks} or \emph{triples}, such that every $2$-element subset of $V$ is contained in at most one triple in ${\cal B}$. Equivalently, such a triple system ${\cal X}$ may be viewed as a pair $(K,{\cal B})$, where $K$ is a graph with vertex set $V$ and edge set $E$ consisting of all pairs $uv$ for distinct points $u,v\in V$ such that $\{u,v\}$ is a subset of some block in ${\cal B}$. In other words, in the graph setting, ${\cal B}$ is regarded as a decomposition of the edge set $E$ of $K$ into triangles. Of course, such a graph $K=(V,E)$ may admit many decompositions into triangles and so the set of blocks ${\cal B}$ needs to be specified. We will refer to $K=(V,E)$ with the specified decomposition ${\cal B}$ of $E$ as the graph {\em associated} with the triple system ${\cal X}=(V,{\cal B})$. We will be assuming throughout that the triple systems considered here are {\em connected}, meaning that their associated graphs are connected.

In the case of a Steiner triple system $\mathcal{S}=\STS(n)$ where $n=|V|$, the associated graph is the complete graph $K_n$. Such systems exist if and only if $n \equiv$ 1 or 3$\pmod 6$ \cite{Kirkman1847}. For a Latin square $L = \LS(n)$, the associated graph is the complete tripartite graph $K_{n,n,n}$ where the three parts of the tripartition are the rows, the columns and the entries of the Latin square. 
A \emph{configuration} is a finite incidence structure with $v$ points and $b$ blocks, with the property that there exist positive integers $k$ and $r$ such that:
\begin{enumerate}[label=(\roman*),topsep=0ex,itemsep=-1ex]
	\item each block contains exactly $k$ points;
	\item each point is contained in exactly $r$ blocks; and
	\item any pair of distinct points is contained in at most one block.
\end{enumerate}
If $v=b$ (and hence necessarily $r=k$), the configuration is called \emph{symmetric} and is usually denoted by $v_k$. In this paper we are concerned only with the case of symmetric configurations $v_3$.
Clearly a symmetric configuration $v_3$ is a triple system, and its associated graph is regular of valency 6.

By an \emph{embedding} of a triple system ${\cal X}=(V,{\cal B})$ we will understand a cellular embedding $\vartheta:\ K\to \Sigma$ of the associated graph $K=(V,E)$ of $\cal X$ in an orientable surface $\Sigma$, such that every triangle in ${\cal B}$ bounds a face of $\vartheta$. Such faces will be called \emph{block faces}, and the remaining faces of the embedding $\vartheta$ will be called \emph{outer faces}. By the properties of the set ${\cal B}$, in the embedding $\vartheta$ every edge of $E$ lies on the boundary of at most one block face, so that there is at least one outer face in $\vartheta$. The extreme case occurs if such an embedding has exactly one outer face; we then speak about an \emph{upper embedding} and call the triple system ${\cal X}=(V,{\cal B})$ \emph{upper embeddable}. We note here that in an upper embeddable triple system, Euler's formula $V+F-E=2-2g$ implies that $v+(v+1)-3v=1-v=2-2g$, so that $v$ is necessarily odd.

It was proved in \cite{Grannell2005} that every Steiner triple system $\STS(n)$ and in \cite{Griggs2018} that every Latin square $\LS(n)$ where $n$ is odd has an upper embedding in an orientable surface. In the Latin square case, again, the restriction that $n$ must be odd is determined by Euler's formula. 
The basic method employed in both of these papers is first to embed a subset of the set of triples $\mathcal{B}$ in a sphere with one outer face which contains all the points $V$. Further triples are then added one at a time using an additional handle until the required upper embedding is obtained.

A necessary and sufficient condition for upper embeddability of triple systems follows from available knowledge about upper embeddings of graphs in general. To make use of this we will represent triple systems by their point-block incidence graphs as usual in design theory. For a triple system ${\cal X}=(V,{\cal B})$ its \emph{Levi graph} or \emph{point-block incidence graph} is the bipartite graph $G({\cal X})$ with vertex set $V\cup {\cal B}$ and edge set consisting of pairs $\{v,B\}$ for $v\in V$ and $B\in {\cal B}$ such that $v \in B$. The pair $(V,{\cal B})$ forms the bi-partition of the vertex set of $G({\cal X})$; vertices in $V$ and ${\cal B}$ will be referred to as \emph{point vertices} and \emph{block vertices} respectively. By our convention for symmetric configurations, the graph $G({\cal X})$ is assumed to be connected, and note that every point vertex and every block vertex has valency $3$ in $G({\cal X})$.

As is well known, there is a one-to-one correspondence between orientable embeddings of a triple system $\mathcal{X}$ and its Levi graph $G(\mathcal{X})$ in orientable surfaces. See~\cite{Griggs2019} for an explanation of this. In particular, a triple system $\mathcal{X}$ is upper embeddable if and only if its Levi graph is embeddable with exactly one face; such graphs are also called upper embeddable. By a classical result of Jungerman~\cite{Jungerman1978}, a graph is upper embeddable if and only if the graph contains a spanning tree such that each of its co-tree components has an even number of edges. Stating this formally, we have the following result.
\begin{theorem}[\cite{Jungerman1978}]\label{thm:jungerman}
Let $\mathcal{X}$ be a triple system and let $G=G(\mathcal{X})$ be its Levi graph. Then $\mathcal{X}$ admits an upper embedding if and only if $G$ admits a spanning tree such that each of its co-tree components has an even number of edges.
\end{theorem}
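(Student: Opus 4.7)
The plan is to derive this statement directly from Jungerman's general characterization of upper embeddable graphs, using the passage between a triple system and its Levi graph that is invoked just before the theorem.

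First I would recall the one-to-one correspondence between orientable embeddings of a triple system $\mathcal{X}$ and those of its Levi graph $G=G(\mathcal{X})$, as referenced (via \cite{Griggs2019}) in the excerpt. The crucial feature of this correspondence for us is that it matches faces of an embedding of $\mathcal{X}$ on an orientable surface to faces of the corresponding embedding of $G$: each block face of $\mathcal{X}$ corresponds to a $6$-face of $G$ (bounded by alternating point and block vertices around a block), while each outer face of $\mathcal{X}$ corresponds to a single face of $G$. Consequently, $\mathcal{X}$ is upper embeddable (has exactly one outer face) if and only if the bipartite cubic graph $G$ admits a one-face orientable embedding, i.e.\ $G$ itself is upper embeddable in the graph-theoretic sense.

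Second, since $\mathcal{X}$ is assumed connected, $G$ is connected, and Jungerman's theorem \cite{Jungerman1978} applies: a connected graph is upper embeddable if and only if it possesses a spanning tree whose co-tree decomposes into components each with an even number of edges. Translating this condition back through the correspondence of the first step yields exactly the biconditional claimed in the statement.

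The proof is therefore essentially a two-line reduction and carries no real obstacle; the only point that deserves explicit mention is that the correspondence between embeddings of $\mathcal{X}$ and of $G(\mathcal{X})$ preserves the number of outer (non-block) faces, so that ``one outer face for $\mathcal{X}$'' really does correspond to ``one face for $G$''. With this observation in hand, the theorem is immediate from \cite{Jungerman1978}.
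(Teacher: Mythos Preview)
Your overall strategy matches the paper's: the paper does not give a separate proof of this theorem but derives it in the preceding paragraph exactly as you do, by combining the correspondence between embeddings of $\mathcal{X}$ and of $G(\mathcal{X})$ with Jungerman's characterization of upper embeddable graphs.

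There is, however, an inaccuracy in your description of the face correspondence that makes the argument internally inconsistent. You write that each block face of $\mathcal{X}$ corresponds to a $6$-face of $G$, and each outer face to a single face of $G$. If that were so, an upper embedding of $\mathcal{X}$ (with $|\mathcal{B}|$ block faces and one outer face) would yield an embedding of $G$ with $|\mathcal{B}|+1$ faces, not one face, contradicting the conclusion you then draw. In the actual correspondence (obtained, say, by placing a block vertex inside each triangular block face, joining it to its three points, and deleting the three triangle edges), the block faces disappear entirely: each edge of $K$ borders one block face and one outer face, and its deletion merges the small region adjacent to the new block vertex into that outer face. Hence the faces of the resulting embedding of $G$ are in bijection with the \emph{outer} faces of $\mathcal{X}$ only, and one outer face for $\mathcal{X}$ genuinely corresponds to a one-face embedding of $G$. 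An Euler-formula count confirms this: with $v$ points, $|\mathcal{B}|$ blocks, $3|\mathcal{B}|$ edges in both $K$ and $G$, and the same genus, the face counts differ by exactly $|\mathcal{B}|$.

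Once this is corrected, your reduction is exactly the one the paper uses.
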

In both~\cite{Grannell2005} and~\cite{Griggs2018}, no attention was paid to the orientation of the triples in the upper embeddings. However in a more recent paper~\cite{Griggs2019}, the question was posed of which triple systems ${\cal X}=(V,{\cal B})$ have the property that for every choice of orientation of all triples $B\in\mathcal{B}$, the system $\mathcal{X}$ admits an upper embedding in an orientable surface such that the orientation of the surface induces the preassigned orientation of $B$ for every triple $B\in\mathcal{B}$. This property was called \emph{upper embeddability in every orientation of triples}. The authors of~\cite{Griggs2019} found the following sufficient condition.
\begin{theorem}[\cite{Griggs2019}]\label{thm:evenvalency}
Let $\mathcal{X}$ be a triple system and let $G=G(\mathcal{X})$ be its Levi graph. If $G$ admits a spanning tree such that every point vertex has even valency in the corresponding co-tree, then $\mathcal{X}$ admits an upper embedding in every orientation of triples.
\end{theorem}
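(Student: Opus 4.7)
The plan is to construct an explicit rotation system on the Levi graph $G$ realising the desired upper embedding. A preassigned orientation of a triple $B\in\mathcal{B}$ amounts to fixing the cyclic rotation of the three edges at the corresponding block vertex of $G$, so the given orientations completely determine the rotations at all block vertices of $G$. The only freedom left to us is the rotation at each point vertex, and the task reduces to choosing these point-vertex rotations so that face-tracing on $G$ produces exactly one face.

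Let $C=G\setminus T$ be the co-tree. By hypothesis, every point vertex has even valency in $C$, so at each point vertex $v$ the co-tree edges incident to $v$ can be partitioned into pairs. Fix such a pairing at every point vertex once and for all. At each point vertex $v$ I would then build the rotation by taking any cyclic arrangement of the incident edges subject to the single constraint that the two edges of each chosen pair appear consecutively in the cyclic order; tree edges may be placed in any order, and pairs of co-tree edges are inserted so as to stay together.

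The resulting rotation system can be shown to yield an embedding with exactly one face by a Xuong--Jungerman-style handle-addition argument. Starting from $T$ embedded in the sphere with one face, I would add the co-tree edges in their prescribed pairs, each pair anchored at its common point vertex $v$. Placing the two edges of a pair consecutively in the rotation at $v$ realises them jointly as a handle: the first edge splits the face they border, and the second, entering the rotation at $v$ immediately next to the first, rejoins the two halves through the new handle, leaving the face count unchanged and raising the genus by one. After all $|C|/2$ pairs have been processed the embedding has a single face, as required.

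The main obstacle is to verify rigorously that the second edge of each pair always merges back the two faces created by the first, in the presence of the fixed rotations at the two incident block vertices. This is precisely where the hypothesis bites: the pairs are localised at point vertices --- the vertices whose rotations we remain free to choose --- and the consecutive placement of a pair in the rotation at $v$ forces the local face-tracing to stitch together the two newly created face walks regardless of the cyclic orders fixed at the block endpoints. A clean execution proceeds by induction on $|C|$, removing at each step one pair $\{e,f\}$ of co-tree edges at a common point vertex and invoking the inductive one-face property for the smaller subgraph $G\setminus\{e,f\}$, whose spanning tree is still $T$ and whose co-tree still satisfies the even-valency hypothesis at every point vertex.
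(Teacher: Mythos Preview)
The paper does not prove this theorem; it is quoted from \cite{Griggs2019} and used as a black box, so there is no in-paper argument to compare against. Your outline is the expected Xuong-style handle-addition argument, and almost certainly what the cited source does as well.

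The sketch is essentially right, but one step is stated too strongly. You claim that placing the pair $\{e,f\}$ \emph{consecutively} at $v$ already ``forces the local face-tracing to stitch together the two newly created face walks regardless of the cyclic orders fixed at the block endpoints''. Consecutive placement alone does not force this. After inserting $e$, the single face splits into $F_1$ and $F_2$, and the two corners at $v$ adjacent to $e$ lie one in each. The forced insertion slot for $f$ at its block endpoint $b_2$ lies in one of them, say $F_1$. For $f$ to merge the two faces, its $v$-end must go on the $F_2$ side of $e$; if it goes on the $F_1$ side (still ``consecutive'' with $e$) it splits $F_1$ again and you end with three faces. Your first paragraph, which fixes the point-vertex rotations in advance as ``any cyclic arrangement \ldots\ subject to the single constraint that the two edges of each chosen pair appear consecutively'', therefore does not work as written.

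The fix is minor: build the point-vertex rotations \emph{adaptively} inside the induction rather than up front. At each inductive step, insert $e$ in any gap at $v$, determine which of $F_1,F_2$ contains the forced $b_2$-corner, and then place $f$ immediately on the opposite side of $e$. That choice is always available (both sides of $e$ are ``consecutive'' with it), so the induction goes through and yields a one-face embedding with the prescribed block rotations.
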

In~\cite{Griggs2019}, the following two theorems were proved.
\begin{theorem}[\cite{Griggs2019}]
Every Steiner triple system admits an upper embedding in every orientation of triples.
\end{theorem}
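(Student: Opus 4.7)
The plan is to deduce the theorem from Theorem~\ref{thm:evenvalency}: it suffices to exhibit, for the Levi graph $G$ of any $\mathcal{X}=\STS(n)$ with $r=(n-1)/2$, a spanning tree $T$ in which $\deg_T(p)\equiv r\pmod{2}$ for every point vertex $p$. I would fix a reference point $p_0\in V$ and take $T_0$ to consist of all $3r$ edges of $G$ incident to the $r$ block vertices through $p_0$; since those blocks partition $V\setminus\{p_0\}$, this $T_0$ is a tree spanning every point vertex together with the $r$ blocks through~$p_0$, with $\deg_{T_0}(p_0)=r$ and $\deg_{T_0}(q)=1$ for every $q\neq p_0$. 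One then extends $T_0$ to a spanning tree $T$ of~$G$ by attaching each of the remaining $|\mathcal{B}'|=(n-1)(n-3)/6$ blocks $B\not\ni p_0$ via a single edge $\{B,f(B)\}$ with $f(B)\in B$. The parity requirement of Theorem~\ref{thm:evenvalency} then becomes the condition
\begin{equation*}
|f^{-1}(q)|\equiv r-1\pmod{2}\qquad\text{for every }q\in V\setminus\{p_0\}
\end{equation*}
on the map $f\colon\mathcal{B}'\to V\setminus\{p_0\}$.

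My next step is to recast the existence of such an $f$ as a problem in $\mathbb{F}_2^{V\setminus\{p_0\}}$. Write $\beta(f)=(|f^{-1}(q)|\bmod 2)_q$ for the parity vector of $f$. For any two choice functions $f,f_0$, the difference $\beta(f)-\beta(f_0)$ is a sum, over the blocks on which $f$ and $f_0$ disagree, of characteristic vectors $e_{f(B)}+e_{f_0(B)}$ of edges of the auxiliary graph $G^*$ on $V\setminus\{p_0\}$ whose edges are the pairs contained in some block of $\mathcal{B}'$. Because the unique block of $\STS(n)$ through two distinct points $q,q'\neq p_0$ contains $p_0$ if and only if $\{q,q'\}$ is one of the $r$ pairs of the $p_0$-partition, $G^*$ is the complement of a perfect matching in~$K_{n-1}$ and is therefore connected. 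A short calculation shows $(n-1)(r-1)\equiv|\mathcal{B}'|\pmod{2}$, so the target vector $(r-1,\ldots,r-1)$ and $\beta(f_0)$ differ by an even-weight vector; the remaining question is whether every such difference can be realised subject to the ``one edge per triangle'' constraint arising from the fact that each block in $\mathcal{B}'$ is simultaneously a triangle of $G^*$.

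This combinatorial constraint is the part I expect to be hardest. My intended approach is to pair up the support of the target difference and join each pair by a path in $G^*$; any path which uses two edges of the same triangle $T$ can be shortened via the third edge of $T$, so without loss of generality each path uses each triangle at most once. The number of edges of a given triangle used across all paths then lies in $\{0,1,2,3\}$, and since the three edge-characteristic vectors of any triangle sum to zero in $\mathbb{F}_2$, the net contribution of every triangle is either $0$ or a single edge of it, which can be realised by a suitable single choice of~$f(B)$. This produces the required~$f$, and Theorem~\ref{thm:evenvalency} then yields the desired upper embedding in every orientation of triples.
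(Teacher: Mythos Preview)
Note first that the paper does not provide its own proof of this statement; it is quoted from~\cite{Griggs2019} as background, so there is no in-paper argument to compare against. Assessing your sketch on its own merits: the reduction to Theorem~\ref{thm:evenvalency}, the construction of the initial tree $T_0$ through the $r$ blocks on a fixed point $p_0$, and the reformulation as the parity condition $|f^{-1}(q)|\equiv r-1\pmod 2$ for $q\neq p_0$ are all correct and natural.

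The gap is in your final paragraph. Having written the difference $\delta=\tau-\beta(f_0)$ as a sum of edge-vectors of $G^*$ and reduced so that each triangle contributes at most one edge, you assert this ``can be realised by a suitable single choice of $f(B)$''. But changing $f$ on a block $B$ contributes $e_{f(B)}+e_{f_0(B)}$, which is always an edge of $B$ \emph{incident to $f_0(B)$}. If your reduction assigns to $B$ the edge $\{x,y\}$ with $f_0(B)=z\notin\{x,y\}$, no choice of $f(B)$ realises that contribution. A cleaner way to see the obstruction: since $\tau=\sum_{B\in\mathcal{B}'}\chi_B$ (each $q\neq p_0$ lies in exactly $r-1$ blocks of $\mathcal{B}'$), the condition $\beta(f)=\tau$ is equivalent to requiring that the edges $B\setminus\{f(B)\}$, one from \emph{every} block of $\mathcal{B}'$, form an even-degree subgraph of $G^*$. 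Your path-and-collapse argument only shows that $\delta$ lies in the even-weight subspace of $\mathbb{F}_2^{V\setminus\{p_0\}}$ --- immediate from the connectedness of $G^*$ --- and produces edges from a \emph{subset} $S$ of the blocks summing to $\delta$; it does not produce one edge from every block summing to zero, and the blocks outside $S$ are not free, since they too must be assigned an $f$-value contributing to $\beta(f)$. This is a genuine missing step rather than a detail: you still need to prove that such a one-edge-per-triangle even subgraph exists for every $\STS(n)$ and every deleted point, and nothing in your outline does that.
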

\begin{theorem}[\cite{Griggs2019}]
Every Latin square of odd order admits an upper embedding in every orientation of triples.
\end{theorem}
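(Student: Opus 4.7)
By Theorem~\ref{thm:evenvalency} it suffices to exhibit, for any Latin square $L=\LS(n)$ of odd order $n$, a spanning tree of the Levi graph $G=G(L)$ in which every point vertex has odd valency. (Each point vertex has valency $n$ in $G$, so odd valency in the spanning tree is equivalent to even valency in the co-tree.) Writing $R$, $C$, $E$ for the sets of row, column and entry vertices, my plan is to start with the forest $T_0$ consisting of all $n^2$ \emph{row-edges} $\{r_i,B_{i,j}\}$. This $T_0$ is a disjoint union of $n$ stars centred at the row vertices; each row vertex already has the required (odd) valency $n$, while the $2n$ vertices of $C\cup E$ are isolated. It then remains to adjoin $3n-1$ further edges, chosen from the \emph{column-edges} $\{B_{i,j},c_j\}$ and \emph{entry-edges} $\{B_{i,j},e_{L(i,j)}\}$, producing a spanning tree of $G$ in which every column and every entry vertex also has odd valency.

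To coordinate this selection I pass to the auxiliary bipartite graph $\tilde G$ on parts $R$ and $C\cup E$, in which the column-edge $\{B_{i,j},c_j\}$ is represented by $\{r_i,c_j\}$ and the entry-edge $\{B_{i,j},e_{L(i,j)}\}$ by $\{r_i,e_{L(i,j)}\}$. Since $L$ is a Latin square, every row is joined in $\tilde G$ to every column and every entry, so $\tilde G=K_{n,2n}$; moreover, the valency of any $c_j$ or $e_k$ under any selection of column/entry-edges of $G$ equals its valency in the corresponding subgraph of $\tilde G$. Thus it is enough to exhibit a spanning tree $\tilde T$ of $K_{n,2n}$ in which every vertex on the $2n$-side has odd valency, and such a $\tilde T$ is easy to describe: fix a distinguished vertex $w$ on the $2n$-side, join $w$ to all $n$ vertices on the other side, and attach each of the remaining $2n-1$ vertices on the $2n$-side as a leaf to any chosen vertex on the $n$-side. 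This uses $n+(2n-1)=3n-1$ edges, is connected and acyclic, and gives $w$ valency $n$ (odd because $n$ is odd) and every other vertex on the $2n$-side valency $1$.

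The step I expect to require the most care is verifying that lifting $\tilde T$ back to $G$ — replacing each chosen $\tilde G$-edge by the corresponding column- or entry-edge and adjoining the result to $T_0$ — truly yields a spanning tree $T$. I would argue this by processing the chosen edges in a particular order: first the $n$ edges associated with $\{r_i,w\}$, each of which merges the $i$th row-star of $T_0$ with the growing component containing $w$; then the $2n-1$ leaf-edges, each of which attaches a still-isolated column or entry vertex to this one component. Since every added edge strictly decreases the component count, no cycle is created, and after $3n-1$ additions the number of components drops from $3n$ to $1$, so the forest becomes a spanning tree $T$. In $T$ every row vertex has valency $n$, and each $c_j$ or $e_k$ has the same valency as in $\tilde T$; all of these are odd, and Theorem~\ref{thm:evenvalency} then yields the desired upper embedding in every orientation of triples.
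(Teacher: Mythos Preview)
The paper does not prove this statement; it merely cites it from \cite{Griggs2019}, so there is no in-paper proof to compare against.

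That said, your argument is correct and cleanly organised. The reduction to the auxiliary graph $\tilde G\cong K_{n,2n}$ is the decisive idea: because the correspondence between column/entry edges of $G$ and edges of $\tilde G$ is a bijection that preserves the identity of the $C\cup E$ endpoint, the valency of each column or entry vertex in the lifted forest genuinely equals its $\tilde T$-valency. Your component-count verification that $T_0$ together with the lifted edges is a spanning tree is also sound: the $n$ edges through $w$ successively merge the $n$ row-stars and the isolated vertex $w$ into one component, and the remaining $2n-1$ leaf-edges each attach an as-yet isolated vertex of $C\cup E$ to a block already lying in that component, so every addition strictly reduces the number of components and the final edge count is $n^2+3n-1=|V(G)|-1$. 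All point vertices then have odd tree-valency ($n$ for rows and for $w$, and $1$ for the other columns and entries), hence even co-tree valency, and Theorem~\ref{thm:evenvalency} applies. One small stylistic point: in the first sentence you might say explicitly that the bijection between $\tilde G$-edges and column/entry edges of $G$ is well defined because a Latin square has a unique column $j$ with $L(i,j)=k$ for each pair $(i,k)$; you use this implicitly when you lift the entry-edges.
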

The structure of the rest of this paper is as follows. In Section~\ref{sec:nonembed}, using Theorem~\ref{thm:jungerman}, we exhibit a symmetric configuration on 21 points which does not admit an upper embedding in any orientation of triples. We further show that such a symmetric configuration $v_3$ exists for all $v\geq 21$ where $v$ is odd. Thus the situation for symmetric configurations is in stark contrast to that for Steiner triple systems and Latin squares of odd order.\cite{Grannell2005,Griggs2018} The $21_3$ configuration is in fact the smallest example of a non-upper embeddable configuration, and this is dealt with in Section~\ref{sec:small}. The results in this section require extensive computer calculations, and as a by-product we needed to extend the table of configurations $v_3, 7\leq v\leq 18$, given in~\cite{Betten2000}, to the case where $v=19$. This is Section~\ref{sec:enum}. Finally, in Section~\ref{sec:everyorient} we exhibit an infinite class of symmetric configurations which are upper embeddable in every orientation of triples.
\section{Non-embeddable configurations}\label{sec:nonembed}
First, we use Theorem~\ref{thm:jungerman} to deduce the existence of a symmetric configuration which is not upper embeddable in any orientation.
\begin{theorem}\label{thm:bad21}
There exists a symmetric configuration $21_3$ which is not upper embeddable in any orientation.
\end{theorem}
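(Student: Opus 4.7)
My plan is to reduce the statement to a purely graph-theoretic non-embeddability problem via Theorem~\ref{thm:jungerman}. Any upper embedding of a triple system in any orientation of triples is, in particular, a one-face orientable embedding of the associated Levi graph, so by Theorem~\ref{thm:jungerman} it suffices to produce a $21_3$ configuration $\mathcal{X}$ whose Levi graph $G = G(\mathcal{X})$ admits \emph{no} spanning tree for which every co-tree component has an even number of edges. This is strictly weaker than showing non-embeddability in any fixed orientation, and conveniently removes the orientation data from the problem entirely.

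For any $21_3$, the Levi graph $G$ is a cubic bipartite graph with $42$ vertices and $63$ edges. Every spanning tree therefore has $41$ edges, leaving a co-tree with $22$ edges. Since $22$ is even, the number of odd-size co-tree components is always even, hence is either $0$ or at least $2$. The task is thus to exhibit a single $21_3$ such that for every spanning tree $T$ of $G$, the co-tree $G - E(T)$ has at least two components with an odd number of edges. Equivalently, I need a configuration whose Levi graph has maximum-genus deficiency at least $2$.

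I would produce such a configuration by a targeted computer search, enumerating $21_3$ configurations and, for each, testing the spanning-tree criterion of Theorem~\ref{thm:jungerman}. The minimum number of odd co-tree components over all spanning trees is a well-studied invariant of $G$ and is computable in polynomial time (via the matroid-parity-based maximum-genus algorithm), so large-scale verification is feasible; the search stops at the first Levi graph for which the minimum exceeds $1$. The main obstacle is precisely the universal quantification over spanning trees: since brute-force enumeration is hopeless on a cubic graph with $42$ vertices, the verification must rely either on the polynomial-time deficiency algorithm or on a short combinatorial certificate, for instance a cut in the witness whose structure forces at least two odd co-tree components for every choice of spanning tree. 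Once a witness $\mathcal{X}$ has been located, the paper can record its $21$ blocks explicitly, leaving the reader to reconstruct $G$ and verify the property independently; this fits the paper's announced reliance on extensive computer calculations.
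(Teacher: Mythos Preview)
Your reduction via Theorem~\ref{thm:jungerman} is exactly right: it suffices to exhibit a $21_3$ whose Levi graph admits no spanning tree with all co-tree components of even size, and your parity observation that any co-tree has $22$ edges is correct. But the proposal stops at outlining a search strategy rather than producing either a witness or an argument, and it rests on a mistaken expectation: the paper's ``extensive computer calculations'' are used only in Section~\ref{sec:small} to verify embeddability of all configurations on at most $19$ points, not to locate the bad $21_3$.

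The paper's proof of Theorem~\ref{thm:bad21} is entirely constructive and computer-free, and is precisely the kind of ``short combinatorial certificate'' you anticipated but did not supply. Take three copies $H_1,H_2,H_3$ of the Heawood graph (the Levi graph of the Fano $7_3$), delete one edge from each, and join the resulting degree-$2$ vertices cyclically by three new bridge edges so as to obtain a cubic bipartite graph $G$ of girth $6$ on $42$ vertices. The three bridges form an edge-cut, so any spanning tree $T$ of $G$ uses either exactly two of them (and then each $T\cap H_i$ is a spanning tree of its $14$ vertices) or all three (and then exactly one $T\cap H_i$ is disconnected, the other two being spanning trees). In either case some $H_i$, say $H_1$, carries a connected spanning subtree on $13$ edges and has both of its incident bridges in $T$; the co-tree restricted to $H_1$ then consists of $20-13=7$ edges and is isolated from the rest of the co-tree, forcing an odd component. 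No search, no maximum-genus algorithm, and no appeal to matroid parity is needed; the construction also generalises immediately to give Theorem~\ref{thm:badall}.
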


\begin{proof}
We construct the configuration via its Levi graph. Let $H_1,H_2,H_3$ be three copies of the Heawood graph, which is the Levi graph of the unique $7_3$ configuration. Delete one edge from each of $H_1,H_2,H_3$ (the Heawood graph is arc-transitive so the choice of edges is arbitrary). Join the point (black) vertex of valency 2 in $H_1$ so obtained to the block (white) vertex of valency 2 in $H_2$, and similarly between $H_2,H_3$ and $H_3,H_1$. The graph $G$ thus created is a cubic bipartite graph of girth 6 (Figure~\ref{fig:bad21}) and so is the Levi graph of a $21_3$ symmetric configuration $\mathcal{X}$.

Let $T$ be a spanning tree of $G$, and let $T_1,T_2,T_3$ be the subgraphs of $T$ induced by the vertices of $H_1,H_1,H_3$ respectively. It is easy to see that exactly one of the following two situations arises.
\begin{enumerate}[label=(\roman*),itemsep=0ex,topsep=0ex]
	\item $T$ contains exactly two of the newly added edges between $H_1,H_2,H_3$; and all of $T_1,T_2,T_3$ are connected.
	\item $T$ contains all three of the newly added edges between $H_1,H_2,H_3$; and exactly one of $T_1,T_2,T_3$ is disconnected.
\end{enumerate}
In case (i), without loss of generality we may assume that $T$ contains the edges between $H_1,H_2$ and $H_1,H_3$. Now consider the subgraph of the co-tree of $T$ induced by the vertices of $H_1$. This is disconnected from the remainder of the co-tree, and contains exactly 7 edges. Thus the co-tree of $T$ contains a component with an odd number of edges.

In case (ii), without loss of generality we may assume that $T_2$ is disconnected, and so $T_1$ is connected and the same analysis as in case (i) shows that the co-tree of $T$ contains a component with an odd number of edges.

Thus by Theorem~\ref{thm:jungerman}, $\mathcal{X}$ is not upper embeddable in any orientation.
\end{proof}
We note that this configuration has appeared before in the literature in a different context. Viewing the configuration as a 3-regular, 3-uniform hypergraph, Bollob\'as and Harris~\cite{Bollobas1985} constructed it as an example of a hypergraph with chromatic number 3. The underlying method of the construction in Theorem~\ref{thm:bad21} has also appeared in the context of geometric configurations; Gr\"unbaum uses a similar technique in~\cite[Fig. 2.3.3]{Grunbaum2009} to construct a combinatorial configuration $16_3$ which is not geometrically realizable.

\begin{figure}[h]\centering
\begin{tikzpicture}[x=0.3mm,y=-0.3mm,inner sep=0.2mm,scale=0.6,thick,wvertex/.style={circle,draw,minimum size=10,fill=white},bvertex/.style={circle,draw,minimum size=10,fill=black}]
\node at (385,230) (H1) {$H_1$};
\node at (200,500) (H2) {$H_2$};
\node at (570,500) (H3) {$H_3$};
\node at (204.3,609.7) [bvertex] (v1) {};
\node at (154.5,597.3) [wvertex] (v2) {};
\node at (115,564.4) [bvertex] (v3) {};
\node at (209,379) [wvertex] (v4) {};
\node at (158.8,389.4) [bvertex] (v5) {};
\node at (117.9,420.6) [wvertex] (v6) {};
\node at (94.7,466.4) [bvertex] (v7) {};
\node at (93.6,517.7) [wvertex] (v8) {};
\node at (295.4,568.1) [bvertex] (v9) {};
\node at (318.6,522.3) [wvertex] (v10) {};
\node at (319.7,470) [bvertex] (v11) {};
\node at (298.3,424.2) [wvertex] (v12) {};
\node at (258.8,391.5) [bvertex] (v13) {};
\node at (254.7,599.3) [wvertex] (v14) {};
\node at (453,316.2) [bvertex] (v15) {};
\node at (406.9,338.7) [wvertex] (v16) {};
\node at (355.5,339) [bvertex] (v17) {};
\node at (308.3,136.5) [wvertex] (v18) {};
\node at (276.4,176.8) [bvertex] (v19) {};
\node at (265.3,226.9) [wvertex] (v20) {};
\node at (277,276.9) [bvertex] (v21) {};
\node at (309.2,316.9) [wvertex] (v22) {};
\node at (496,225.8) [bvertex] (v23) {};
\node at (484.3,175.8) [wvertex] (v24) {};
\node at (452.1,135.8) [bvertex] (v25) {};
\node at (405.9,113.7) [wvertex] (v26) {};
\node at (354.4,114) [bvertex] (v27) {};
\node at (484.9,275.8) [wvertex] (v28) {};
\node at (678.8,513.7) [bvertex] (v29) {};
\node at (658,560.5) [wvertex] (v30) {};
\node at (618.8,593.8) [bvertex] (v31) {};
\node at (452.5,470) [wvertex] (v32) {};
\node at (453.9,520.4) [bvertex] (v33) {};
\node at (477.6,565.9) [wvertex] (v34) {};
\node at (518.7,596.7) [bvertex] (v35) {};
\node at (569.1,606.6) [wvertex] (v36) {};
\node at (653.7,416.8) [bvertex] (v37) {};
\node at (612.6,386) [wvertex] (v38) {};
\node at (562.2,376.1) [bvertex] (v39) {};
\node at (512.6,388.8) [wvertex] (v40) {};
\node at (473.3,422.2) [bvertex] (v41) {};
\node at (677.3,462.2) [wvertex] (v42) {};
\path
(v1) edge (v2)
(v1) edge (v14)
(v2) edge (v3)
(v3) edge (v8)
(v4) edge (v5)
(v4) edge (v13)
(v5) edge (v6)
(v6) edge (v7)
(v7) edge (v8)
(v9) edge (v10)
(v9) edge (v14)
(v10) edge (v11)
(v12) edge (v13)
(v7) edge (v14)
(v2) edge (v5)
(v3) edge (v10)
(v6) edge (v11)
(v4) edge (v9)
(v1) edge (v12)
(v8) edge (v13)
(v15) edge (v16)
(v15) edge (v28)
(v17) edge (v22)
(v18) edge (v19)
(v18) edge (v27)
(v19) edge (v20)
(v20) edge (v21)
(v21) edge (v22)
(v23) edge (v24)
(v23) edge (v28)
(v24) edge (v25)
(v25) edge (v26)
(v26) edge (v27)
(v21) edge (v28)
(v16) edge (v19)
(v17) edge (v24)
(v20) edge (v25)
(v18) edge (v23)
(v15) edge (v26)
(v22) edge (v27)
(v29) edge (v30)
(v29) edge (v42)
(v30) edge (v31)
(v31) edge (v36)
(v32) edge (v33)
(v33) edge (v34)
(v34) edge (v35)
(v35) edge (v36)
(v37) edge (v38)
(v37) edge (v42)
(v38) edge (v39)
(v39) edge (v40)
(v40) edge (v41)
(v35) edge (v42)
(v30) edge (v33)
(v31) edge (v38)
(v34) edge (v39)
(v32) edge (v37)
(v29) edge (v40)
(v36) edge (v41)
(v16) edge (v41)
(v11) edge (v32)
(v12) edge (v17)
;
\end{tikzpicture}
\caption{The Levi graph of the non-embeddable configuration of Theorem~\ref{thm:bad21}}
\label{fig:bad21}
\end{figure}
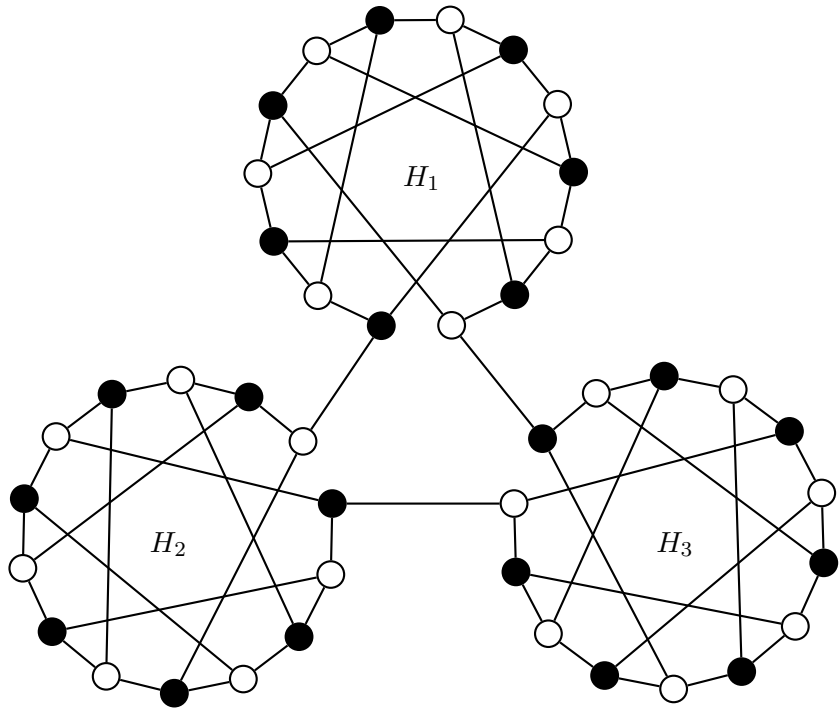

A natural extension of the argument of Theorem~\ref{thm:bad21} leads to our next result, showing that there exist symmetric configurations $v_3$ which are not upper embeddable in any orientation, for all odd $v\geq 21$.

\begin{theorem}\label{thm:badall}
Let $v$ be an odd number with $v\geq 21$. Then there exists a symmetric configuration $v_3$ which is not upper embeddable in any orientation.
\end{theorem}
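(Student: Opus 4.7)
My plan is to adapt the three-piece construction from the proof of Theorem~\ref{thm:bad21} to arbitrary odd $v \geq 21$, replacing one or more Heawood graphs with Levi graphs of larger configurations of odd order. Concretely, for odd $v \geq 21$ I would write $v = n_1 + n_2 + n_3$ with each $n_i$ odd and $n_i \geq 7$; the choice $n_1 = n_2 = 7$ and $n_3 = v - 14$ always works, since $v$ odd forces $n_3$ odd and $v \geq 21$ forces $n_3 \geq 7$. For each $i$, I would take $H_i$ to be the Levi graph of any symmetric $(n_i)_3$ configuration, which exists for every $n_i \geq 7$ by standard results such as cyclic constructions.

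I would then repeat the bridging construction: delete one edge $\{p_i, B_i\}$ from each $H_i$ and add three bridges $\{p_1, B_2\}$, $\{p_2, B_3\}$, $\{p_3, B_1\}$ to obtain a cubic bipartite graph $G$ on $2v$ vertices. To confirm that $G$ is the Levi graph of a symmetric $v_3$ configuration $\mathcal{X}$, I check that $G$ has girth at least $6$. Since each $H_i$ already has girth at least $6$ and the six bridge endpoints are pairwise distinct, a short case check rules out $4$-cycles using $0$, $1$, $2$, or $3$ bridges --- a $2$-bridge $4$-cycle in particular would need to re-use one of the deleted edges in order to close up.

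For non-embeddability I would rerun the spanning tree case analysis. Let $T$ be any spanning tree of $G$ and set $T_i = T \cap H_i$. Either (i) $T$ contains exactly two bridges and every $T_i$ is a spanning tree of its piece, or (ii) $T$ contains all three bridges; in case (ii) the edge count $\sum_i |T_i| = 2v-4$ together with the bound $|T_i| \leq 2n_i - 1$ forces exactly one $T_i$ to be a two-component forest while the other two remain spanning trees. In case (i) the co-tree restricted to the unique $H_k$ not incident with the missing bridge is an isolated union of co-tree components of total edge-count $n_k$, which is odd; in case (ii) the co-tree has no bridge edges, and each of the two $H_i$ whose $T_i$ is a spanning tree contributes an isolated $n_i$-edge chunk of the co-tree with $n_i$ odd. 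In both cases some component of the co-tree has odd edge count, so by Theorem~\ref{thm:jungerman} $\mathcal{X}$ is not upper embeddable in any orientation. The chief obstacle is making the parity argument uniform across both cases and across every possible distinguished piece (the missing bridge in case (i), the disconnected $T_i$ in case (ii)); this is precisely why all three $n_i$ must be odd, not merely one or two.
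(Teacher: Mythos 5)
Your proposal is correct and follows essentially the same route as the paper, which takes $H_1$ to be the Levi graph of an arbitrary symmetric $w_3$ configuration with $w=v-14$ and $H_2,H_3$ to be Heawood graphs, then invokes the same spanning-tree parity analysis as in Theorem~\ref{thm:bad21}. Your version merely spells out the edge counts (co-tree contribution $n_i$ per piece, forcing an odd component) and the girth check that the paper leaves implicit, and observes the mild extra generality that any odd decomposition $v=n_1+n_2+n_3$ with $n_i\geq 7$ works.
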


\begin{proof}
Let $w=v-14$. Then $w$ is an odd number with $w\geq 7$. Choose any symmetric configuration $w_3$, and let $H_1$ be its Levi graph; let $H_2$ and $H_3$ be two copies of the Heawood graph. Delete one edge from each of $H_1$, $H_2$ and $H_3$ and carry out the same procedure as in the proof of Theorem~\ref{thm:bad21} to construct the Levi graph of a configuration $v_3$. A similar analysis to the proof of Theorem~\ref{thm:bad21} shows that the co-tree of any spanning tree of this graph contains a component with an odd number of edges, and so the configuration is not upper embeddable in any orientation.
\end{proof}

\section{Embeddability of small configurations}\label{sec:small}
In order to show that a symmetric configuration is upper embeddable in every orientation of triples, we can in principle use Theorem~\ref{thm:evenvalency} and find a spanning tree with the required property. However, as a practical method of determining upper embeddability of any given configuration, this suffers from the substantial drawback that one might need to test each spanning tree of the Levi graph. Except in trivial cases, this would be prohibitive. The following result gives a much simpler computational test.

\begin{theorem}\label{thm:covset}
Let $\mathcal{X}=(V,\mathcal{B})$ be a symmetric configuration $v_3$ for some odd $v\geq 7$, and let $G$ be its Levi graph. Suppose that there exists a subset $S$ of $V$ of size $(v-1)/2$ with the property that every block $B\in\mathcal{B}$ contains a point of $S$, and the subgraph of $G$ induced by the points of $S$ and all the blocks in $\mathcal{B}$ is connected. Then $\mathcal{X}$ is upper embeddable in every orientation.
\end{theorem}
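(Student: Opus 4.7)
The plan is to invoke Theorem~\ref{thm:evenvalency}: it is enough to exhibit a spanning tree $T$ of the Levi graph $G$ in which every point vertex has even co-tree valency, equivalently odd valency in $T$ (since each point has valency $3$ in $G$).

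First I would examine the induced subgraph $G_0 = G[S \cup \mathcal{B}]$ under the hypothesis. The vertex count is $|V(G_0)| = |S| + |\mathcal{B}| = (v-1)/2 + v = (3v-1)/2$. For the edge count, note that $G$ is bipartite with parts $V$ and $\mathcal{B}$, and each $p \in S$ has all three of its $G$-neighbours in $\mathcal{B} \subseteq V(G_0)$; thus $|E(G_0)| = 3|S| = (3v-3)/2$. Comparing, $|E(G_0)| = |V(G_0)| - 1$, and combined with the assumed connectedness of $G_0$, this forces $G_0$ itself to be a tree.

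Next I would extend $G_0$ to a spanning tree $T$ of $G$. For each point $p \in V \setminus S$, all three of its $G$-neighbours are blocks and therefore already lie in $V(G_0)$, so choosing any one incident edge adds $p$ to the tree without creating a cycle. After these $(v+1)/2$ additions, the resulting graph $T$ is connected and acyclic on $V(G)$ with $(3v-3)/2 + (v+1)/2 = 2v-1$ edges, i.e.\ a spanning tree of $G$. Reading off the valencies: every $p \in S$ keeps all three of its $G$-edges (they all lie in $G_0 \subseteq T$), so $\deg_T(p) = 3$; every $p \in V \setminus S$ has $\deg_T(p) = 1$ by construction. In either case the co-tree valency $3 - \deg_T(p)$ is even, so Theorem~\ref{thm:evenvalency} immediately gives upper embeddability in every orientation.

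The only non-routine step is the edge-count observation that promotes the hypothesis "$G_0$ is connected" to "$G_0$ is a tree"; once this is in hand, no further choices are required and there is essentially no obstacle. It is this tightness — the fact that $|S|=(v-1)/2$ is precisely the size that makes $|E(G_0)|$ equal to $|V(G_0)|-1$ — that makes the criterion so easy to test in practice.
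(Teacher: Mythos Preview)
Your proof is correct and follows essentially the same route as the paper's: count vertices and edges of $G[S\cup\mathcal{B}]$ to see it is a tree, attach each remaining point by a single edge, and observe that every point vertex then has odd valency in $T$ so that Theorem~\ref{thm:evenvalency} applies. The only difference is cosmetic (you spell out the edge count $3|S|$ via bipartiteness, while the paper simply states it).
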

\begin{proof}
Suppose we have such a set $S$. Then the set of neighbours $N(S)$ is the whole of $\mathcal{B}$. The subgraph of $G$ induced by $S\cup\mathcal{B}$ has $(v-1)/2+v$ vertices and $3(v-1)/2$ edges, and is connected by assumption. Thus it is a tree. We may extend this subgraph to a spanning tree $T$ of $G$ by joining each of the remaining $(v+1)/2$ vertices in $V\setminus S$ to exactly one block in any way we please. Thus each vertex in $G$ corresponding to a point in $V$ has valency 1 or 3 in $T$, and hence valency 2 or 0 in the co-tree of $T$. Thus $T$ satisfies the conditions of Theorem~\ref{thm:evenvalency}.
\end{proof}

By Theorem~\ref{thm:bad21} we know that there is a configuration $21_3$ which is not upper embeddable in any orientation. The question now is whether this is the smallest such example. 

We note that by the results of~\cite[Lemma 1]{Archdeacon2004}, if $G$ is a cubic bipartite graph of order $2v$, then there exists a set of vertices of size $(v-1)/2$ in either colour class of $G$ which dominates all the vertices in the other class. Thus we know that for any symmetric configuration $v_3$ there exists a set $S$ of points of size $(v-1)/2$ such that all the blocks contain a point of $S$. However, this result does not guarantee \emph{connectedness} of the resulting subgraph of the Levi graph, which is crucial to apply Theorem~\ref{thm:covset}. Thus we are obliged to use a computational approach to test this condition for all possible configurations. Theorem~\ref{thm:covset} gives us a computationally feasible method of testing all smaller configurations, since it requires only that we test the $\displaystyle\binom{v}{\frac{v-1}{2}}$ possible subsets of $V$ of the correct size. The outline algorithm is as follows.
\begin{enumerate}[label=\arabic*.,topsep=0ex,itemsep=0ex]
	\item For given odd $v\geq 7$, generate all possible (isomorphism classes of) connected cubic bipartite graphs of order $2v$ and girth at least 6.
	\item For each graph, form each of the two configurations obtained by considering each colour class in turn as the point set. (If the graph admits an automorphism swapping the colour classes, then only a single self-dual configuration is obtained.)
	\item For each configuration, generate every possible subset of the point set $V$ of size $(v-1)/2$ until one satisfying the conditions of Theorem~\ref{thm:covset} is found.
\end{enumerate}

Generation of the graphs was done by the same method~\cite{brinkmann1996fast} as used in the previous enumeration of symmetric configurations~\cite{Betten2000}. Automorphisms were determined using the program \texttt{nauty}~\cite{mckay201494}.

Although for larger $v$ the numbers of configurations grow rapidly (see Table~\ref{tab:enum}), we were able to complete this algorithm for all odd $v$ up to and including 19. No configurations were found which fail to have a set of points satisfying the requirements of Theorem~\ref{thm:covset}. Thus we have the following result.
\begin{theorem}\label{thm:config19}
If $v$ is an odd number with $7\leq v\leq 19$, then all configurations $v_3$ are upper embeddable in every orientation.
\end{theorem}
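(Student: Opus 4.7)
The plan is to treat this as a computational verification theorem built on the sufficient condition of Theorem~\ref{thm:covset}. By that theorem it suffices, for every symmetric configuration $v_3$ with $v\in\{7,9,11,13,15,17,19\}$, to exhibit a subset $S\subseteq V$ with $|S|=(v-1)/2$ such that (a) every block meets $S$, and (b) the subgraph of the Levi graph induced by $S\cup\mathcal{B}$ is connected. So the whole argument reduces to a finite, if large, search.

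First I would carry out the enumeration step precisely as sketched in the three-point algorithm before the statement. Generate all pairwise non-isomorphic connected cubic bipartite graphs of order $2v$ and girth at least $6$ using the canonical construction method of Brinkmann et al.~\cite{brinkmann1996fast}, and for each such graph use \texttt{nauty}~\cite{mckay201494} to detect colour-swapping automorphisms; this yields one configuration per self-dual graph and a dual pair otherwise. The table of counts up through $v=18$ matches~\cite{Betten2000}, and extending the enumeration to $v=19$ is required here (and is the subject of Section~\ref{sec:enum}). The totals guarantee the list is complete and correct.

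Second, for each configuration in the enumerated list, search the $\binom{v}{(v-1)/2}$ subsets of $V$ of size $(v-1)/2$ for one satisfying the two properties of Theorem~\ref{thm:covset}. The subsets can be generated in any convenient order; in practice one can prune heavily by first requiring the dominance condition (every block contains a point of $S$), which is a $0/1$ set-cover style constraint readily handled by backtracking over the points, and only then testing connectedness of the induced bipartite subgraph by a breadth-first search on $S\cup\mathcal{B}$. As soon as a single valid $S$ is located the configuration is certified, so the search terminates early on most inputs.

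The main obstacle is computational: the number of configurations grows rapidly with $v$ (see Table~\ref{tab:enum}), so that at $v=19$ one must combine an efficient enumeration of the underlying graphs with an effective pruning of the $\binom{19}{9}=92378$ candidate subsets per configuration. Provided the search runs to completion across all configurations in the range and no counterexample is found at any value of $v$, the conclusion of the theorem follows immediately from Theorem~\ref{thm:covset}. The theorem is thus established by exhaustive verification, with the enumeration of configurations on up to $19$ points and the per-configuration subset search together constituting the proof.
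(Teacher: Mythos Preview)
Your proposal is correct and follows essentially the same approach as the paper: an exhaustive computational verification that every configuration $v_3$ for odd $7\le v\le 19$ admits a subset $S$ of size $(v-1)/2$ satisfying the hypotheses of Theorem~\ref{thm:covset}, with the configurations enumerated via their Levi graphs using~\cite{brinkmann1996fast} and~\cite{mckay201494}. The extra implementation details you mention (pruning on the dominance condition before testing connectedness) are reasonable optimisations but do not alter the underlying method.
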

\section{Enumeration}\label{sec:enum}
As a by-product of the computational process, we were able to validate the published figures enumerating the configurations $v_3$, $7\leq v\leq 19$. Up to $v=18$ the enumeration was published in~\cite{Betten2000} and we agree with those figures. The enumeration was later extended to $v=19$ in~\cite{Pisanski2004}, which contains a note that the result required a large number of parallel computer runs. With the technology available at the time, this would have required manual collation of many hundreds of results. Our runs revealed a discrepancy with the figure for $v=19$, and following independent checking~\cite{Brinkmann2018} we give a revised table in Table~\ref{tab:enum}.

For completeness we describe here the properties enumerated in Table~\ref{tab:enum}, following the notation of~\cite{Betten2000}. For a configuration $\mathcal{X}$, an \emph{automorphism} is a permutation of the points and blocks of $\mathcal{X}$ which preserves incidence. The \emph{dual} of $\mathcal{X}$ is the configuration obtained by reversing the roles of the points and blocks of $\mathcal{X}$. If $\mathcal{X}$ is isomorphic to its dual, we say it is \emph{self-dual}, and the isomorphism between $\mathcal{X}$ and its dual is an \emph{anti-automorphism}. An anti-automorphism of $\mathcal{X}$ of order two is called a \emph{polarity}, and a configuration admitting such an isomorphism is \emph{self-polar}. The group of all automorphisms of $\mathcal{X}$ (preserving the roles of points and blocks) is denoted by $\Aut(\mathcal{X})$, and the group of all automorphisms and anti-automorphisms by $A(\mathcal{X})$. If $\Aut(\mathcal{X})$ acts transitively on the points of $\mathcal{X}$ then we say $\mathcal{X}$ is \emph{point-transitive}. A \emph{flag} of $\mathcal{X}$ is an ordered pair $(p,B)$ with $p\in B$; if $\Aut(\mathcal{X})$ acts transitively on the set of flags then we say $\mathcal{X}$ is \emph{flag-transitive}; if $A(\mathcal{X})$ acts transitively on the set of flags regarded as \emph{unordered} pairs, then we say $\mathcal{X}$ is \emph{weakly flag-transitive}. A \emph{cyclic} configuration $\mathcal{X}$ is one admitting a cyclic subgroup of $\Aut(\mathcal{X})$ acting regularly on points. A \emph{blocking set} $X$ is a subset of points of $\mathcal{X}$ such that each block of $\mathcal{X}$ contains at least one element of $X$ and one element not in $X$. A \emph{blocking set-free} configuration contains no blocking sets.
\begin{table}[h]
\centering\setlength{\tabcolsep}{1.1em}
\begin{tabular}{rrrrrrrrrr}
	\hline
	$v$ & $a$ & $b$ & $c$ & $d$ & $e$ & $f$ & $g$ & $h$ & $i$ \\
	\hline
	7 & 1 & 1 & 1 & 1 & 1 & 1 & 1 & 1 & 0 \\ 
	8 & 1 & 1 & 1 & 1 & 1 & 1 & 1 & 0 & 0 \\ 
	9 & 3 & 3 & 3 & 2 & 1 & 1 & 1 & 0 & 0 \\ 
	10 & 10 & 10 & 10 & 2 & 1 & 1 & 1 & 0 & 0 \\ 
	11 & 31 & 25 & 25 & 1 & 1 & 0 & 0 & 0 & 0 \\ 
	12 & 229 & 95 & 95 & 4 & 3 & 1 & 1 & 0 & 0 \\ 
	13 & 2,036 & 366 & 365 & 2 & 2 & 1 & 1 & 1 & 0 \\ 
	14 & 21,399 & 1,433 & 1,432 & 3 & 3 & 1 & 1 & 0 & 1 \\ 
	15 & 245,342 & 5,802 & 5,799 & 5 & 4 & 1 & 1 & 0 & 1 \\ 
	16 & 3,004,881 & 24,105 & 24,092 & 6 & 4 & 2 & 2 & 0 & 4 \\ 
	17 & 38,904,499 & 102,479 & 102,413 & 2 & 2 & 0 & 0 & 0 & 13 \\ 
	18 & 530,452,205 & 445,577 & 445,363 & 9 & 5 & 1 & 1 & 0 & 47 \\ 
	19 & 7,597,040,188 & 1,979,772 & 1,979,048 & 3 & 3 & 1 & 1 & 4 & 290 \\ 
	\hline
	\multicolumn{10}{p{15cm}}{\footnotesize
		\textbf{Note}: $a$ is the number of configurations $v_3$; $b$ is the number of self-dual configurations; $c$ is the number of self-polar configurations; $d$ is the number of point-transitive configurations; $e$ is the number of cyclic configurations; $f$ is the number of flag-transitive configurations; $g$ is the number of weakly flag-transitive configurations; $h$ is the number of connected blocking set-free configurations; $i$ is the number of disconnected configurations.
	} 
\end{tabular}
\caption{Numbers of configurations $v_3$}
\label{tab:enum}
\end{table}

The table in~\cite{Betten2000} included a count for weakly flag-transitive configurations $v_3$. For consistency we include this in Table~\ref{tab:enum}, but note that this count is not required in the case of symmetric configurations $v_3$ as the following shows.
\begin{proposition}
Let $\mathcal{X}$ be a weakly flag-transitive symmetric configuration $v_3$. Then $\mathcal{X}$ is flag-transitive.
\end{proposition}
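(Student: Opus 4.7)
The plan is to split on whether $\mathcal{X}$ is self-dual. If it is not, then $A(\mathcal{X}) = \Aut(\mathcal{X})$, and the group actions on ordered and unordered flags coincide, so weak flag-transitivity is flag-transitivity and there is nothing to prove. So assume $\mathcal{X}$ is self-dual; write $G = A(\mathcal{X})$ and $H = \Aut(\mathcal{X})$, with $[G:H] = 2$.

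First I would establish that $H$ acts transitively on $V$ (and, by duality, on $\mathcal{B}$). Let $V_1,\dots,V_k$ be the $H$-orbits on $V$; conjugation by any anti-automorphism $\alpha \in G \setminus H$ bijects these with the $H$-orbits $\mathcal{B}_1,\dots,\mathcal{B}_k$ on $\mathcal{B}$ via $\alpha(V_i) = \mathcal{B}_i$. Partition the set of flags according to which pair $(V_i, \mathcal{B}_j)$ their point and block belong to. An anti-automorphism sends the class $(i,j)$ to the class $(j,i)$, and $H$ preserves each class setwise. Transitivity of $G$ on flags then forces at most two non-empty classes, namely $(i_0,j_0)$ and $(j_0,i_0)$. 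If $i_0 \ne j_0$, the Levi graph splits into two disjoint bipartite pieces, one on $V_{i_0} \cup \mathcal{B}_{j_0}$ and one on $V_{j_0} \cup \mathcal{B}_{i_0}$, contradicting connectedness. Hence $i_0 = j_0$ and $k = 1$.

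Given point-transitivity, the number of $H$-orbits on the $3v$ ordered flags equals the number of $H_p$-orbits on the three blocks $B_1,B_2,B_3$ through a fixed point $p$. Since $G$ is transitive on flags and $H$ is normal of index two, $H$ has either one orbit on flags or two orbits of equal size $3v/2$. The crux of the argument is to rule out the two-orbit case: if $H_p$ fixes (say) $B_1$ and swaps $B_2$ with $B_3$, then using $|H| = v \cdot |H_p|$ and the orbit-stabiliser theorem, the $H$-orbit of $(p,B_1)$ has size $v$ (its stabiliser equals $H_p$), whereas the $H$-orbit of $(p,B_2)$ has size $2v$ (its stabiliser is the index-$2$ subgroup of $H_p$ that does not swap $B_2$ and $B_3$). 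Since these orbit sizes differ, the equal-size conclusion fails, contradiction. The three-orbit case (where $H_p$ fixes each of $B_1, B_2, B_3$) is immediately excluded by the same index-two reasoning. Hence $H_p$ is transitive on $\{B_1,B_2,B_3\}$ and $H$ is flag-transitive.

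I expect the main obstacle to be the point-transitivity reduction, which requires the bookkeeping of orbits under both $H$ and an anti-automorphism and an appeal to connectedness of the Levi graph to rule out a spurious bipartite refinement. Once that is in hand, the orbit-size clash at the heart of the proof is a short computation powered only by point-transitivity and the standard index-two equal-orbit principle.
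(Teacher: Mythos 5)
Your proof is correct, but it takes a genuinely different route from the paper's. The paper works entirely on the Levi graph: weak flag-transitivity says the graph is edge-transitive, and the proof then cites two standard lemmas from Godsil--Royle --- (a) an edge- but not vertex-transitive graph has exactly two vertex orbits, which here must be the two colour classes, and (b) a vertex- and edge-transitive graph of \emph{odd} valency is arc-transitive --- after which both cases are immediate. Your case split on self-duality coincides with the paper's split on vertex-transitivity of the Levi graph (a connected bipartite Levi graph admits an automorphism swapping the classes exactly when $\mathcal{X}$ is self-dual), and your Case 1 matches the paper's case (i). Your Case 2, however, is in effect a self-contained reproof of lemma (b) in the valency-3 setting: you first establish point-transitivity of $H=\Aut(\mathcal{X})$ by hand (the orbit bookkeeping with a fixed anti-automorphism, using $\alpha^2\in H$ so that $\alpha$ sends the class $(i,j)$ to $(j,i)$, plus connectedness of the Levi graph to kill the $i_0\neq j_0$ alternative --- where the paper instead has vertex-transitivity simply as its case hypothesis), and you then replace the odd-valency lemma by the index-two computation: if $H_p$ had orbit pattern $2+1$ on the three blocks through $p$, the flag orbits of $H$ would have sizes $v$ and $2v$, contradicting the fact that the orbits of a normal subgroup under a transitive overgroup all have equal size, and the pattern $1+1+1$ would give three orbits where at most two are allowed. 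All the stabiliser computations check out. What the paper's approach buys is brevity and placement in a known framework; yours buys self-containedness (no external citations) and, as a by-product, an explicit proof that a weakly flag-transitive self-dual configuration is point-transitive.
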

\begin{proof}
Let $G$ be the Levi graph of $\mathcal{X}$. Then by the definition of weak flag-transitivity, $G$ is edge-transitive. The following standard results are well known; see for example~\cite[Lemmas 3.2.1, 3.2.2]{godsil2001algebraic}:
\begin{enumerate}[topsep=0ex,itemsep=0ex,label=(\roman*)]
	\item If $G$ is not vertex-transitive, then its automorphism group has precisely two vertex orbits, corresponding to the points and blocks respectively of $\mathcal{X}$;
	\item if $G$ is vertex-transitive, then it is arc-transitive since its valency is odd.
\end{enumerate}
In case (i), $\mathcal{X}$ is not self-dual and hence any automorphism mapping an unordered pair $\{p,B\}$ to $\{p',B'\}$ in fact maps the ordered pair $(p,B)$ to $(p',B')$. In case (ii), if its Levi graph is arc-transitive then certainly $\mathcal{X}$ is flag-transitive.
\end{proof}

\section{Configurations embeddable in every orientation}\label{sec:everyorient}
So far we have shown that every configuration $v_3$ where $v$ is odd and $7\leq v\leq 19$ is upper embeddable in every orientation (Theorem~\ref{thm:config19}).
Further, for $v$ odd and $v\geq 21$, we have exhibited a configuration which is not upper embeddable in any orientation (Theorem~\ref{thm:badall}). We are of the opinion that for $v$ odd and $v\geq 21$, in some sense nearly all configurations $v_3$ are upper embeddable in every orientation. Whilst we are unable to prove this, the next result shows that there exists an infinite family of such configurations.
\begin{theorem}\label{thm:cyclic}
	Let $v\geq 7$ be an odd number, and let $\mathcal{X}$ be the cyclic configuration $v_3$ generated by the block $\{0,1,3\}$. Then $\mathcal{X}$ is upper embeddable in every orientation.
\end{theorem}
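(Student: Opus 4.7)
The strategy is to apply Theorem~\ref{thm:covset} by exhibiting an explicit subset $S \subset \Z_v$ of size $(v-1)/2$ that simultaneously covers every block and induces (together with $\mathcal{B}$) a connected subgraph of the Levi graph. Writing the blocks of $\mathcal{X}$ as $B_i = \{i,\,i+1,\,i+3\}$ for $i \in \Z_v$, a natural candidate is
\[
S \;=\; \{0\} \cup \{1, 3, 5, \ldots, v-4\},
\]
whose cardinality is $1 + (v-3)/2 = (v-1)/2$, as required.

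The covering condition should reduce to a short case analysis on the residue of $i$. When $i$ is odd with $1 \leq i \leq v-4$, the index $i$ itself lies in $S \cap B_i$; when $i$ is even with $2 \leq i \leq v-5$, the element $i+1$ is odd and at most $v-4$, so $i+1 \in S \cap B_i$. The remaining blocks are $B_0$, which contains all of $\{0,1,3\} \subset S$, and the three ``wraparound'' blocks $B_{v-3}, B_{v-2}, B_{v-1}$; reducing the entries modulo $v$ places $0$ or $1$ in each, giving the required intersection with $S$.

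For connectedness, observe that $B_0$ is incident in the Levi graph to all three of $0, 1, 3 \in S$ and thereby puts them into a single component. For each odd $s \in S$ with $s \geq 5$, the block $B_{s-3} = \{s-3,\,s-2,\,s\}$ is incident to both $s$ and to the odd element $s-2 \in S$, so $s$ and $s-2$ lie in the same component. A straightforward induction on $s$ then places every element of $S$ in one component of the subgraph induced by $S \cup \mathcal{B}$, after which the covering condition attaches every block vertex to that component. Theorem~\ref{thm:covset} then yields the conclusion.

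The only delicate step is the boundary bookkeeping at $B_{v-3}, B_{v-2}, B_{v-1}$, where the arithmetic mod $v$ must be handled carefully so that the ``$+1$'' or ``$+3$'' of a large index wraps around into $\{0,1\} \subset S$. Beyond this, the argument is essentially the clean chain $0, 1, 3 \to 5 \to 7 \to \cdots \to v-4$ built from the blocks $B_0, B_2, B_4, \ldots, B_{v-7}$, and no serious additional obstacles are expected.
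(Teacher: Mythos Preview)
Your argument is correct and follows the same overall strategy as the paper—namely, exhibiting a covering set $S$ of size $(v-1)/2$ and invoking Theorem~\ref{thm:covset}—but your choice of $S$ is different. The paper splits into two cases according to $v\bmod 4$: for $v\equiv 1\pmod 4$ it takes $S=\{4i,4i+1:0\le i\le (v-5)/4\}$, and for $v\equiv 3\pmod 4$ it takes $S=\{0,2,4,\ldots,v-5,v-1\}$, with separate covering and connectedness checks in each case. Your single set $S=\{0\}\cup\{1,3,5,\ldots,v-4\}$ works uniformly for all odd $v\ge 7$, so your argument is marginally cleaner in that it avoids the case distinction; the price is a handful of wraparound blocks to check individually, which you have handled correctly. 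Either route is short and elementary, and both rest on the same key lemma (Theorem~\ref{thm:covset}).
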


\begin{proof}
	It suffices to exhibit a set $S$ of the points of $\mathcal{X}$ of size $(v-1)/2$ such that all blocks of $\mathcal{X}$ contain a point of $S$, and the subgraph of the Levi graph of $C$ induced by the points of $S$ and all the blocks of $\mathcal{X}$ is connected. There are two cases to consider.
	
	\textbf{Case 1}: $v\equiv 1 \pmod 4$. We take $S$ to be the set of points $\{4i,4i+1:0\leq i\leq (v-5)/4\}$, so $S=\{0,1,4,5,\ldots,v-5,v-4\}$. All the blocks of $\mathcal{X}$ have the form $\{m,m+1,m+3\}$ (mod $v$) and it is easy to see that all such blocks meet $S$ in at least one point. To show connectedness, we must find a path from any point of $S$ to any other. For $0\leq i\leq (v-5)/4$, there is a path between the points $4i$ and $4i+1$ via the block $\{4i,4i+1,4i+3\}$. For $0\leq i\leq (v-9)/4$, there is a path between points $4i+1$ and $4(i+1)$ via the block $\{4i+1,4i+2,4i+4\}$. So any two points of $S$ are joined by some path.
	
	\textbf{Case 2}: $v\equiv 3 \pmod 4$. We take $S$ to be the set $\{2i:0\leq i\leq (v-1)/2,i\neq (v-3)/2\}$, so $S=\{0,2,4,\ldots,v-7,v-5,v-1\}$. As before, it is easy to see that all blocks of $C$ contain a point of $S$. Point 0 is connected to points 2 and $v-1$ via the block $\{v-1,0,2\}$. For $1\leq i\leq (v-7)/2$, point $2i$ is connected to point $2(i+1)$ via the block $\{2i-1,2i,2i+2\}$. So any two points of $S$ are joined by some path.
\end{proof}

A method of constructing symmetric configurations $v_3$ dates back to the paper by Martinetti from 1887~\cite{martinetti1887sulle}. From a given $v_3$ configuration choose two disjoint triples $\{x_1,x_2,x_3\}$ and $\{y_1,y_2,y_3\}$. At least one of the pairs $\{x_1,y_1\}$, $\{x_1,y_2\}$ or $\{x_1,y_3\}$, say without loss of generality the first of these, does not appear in any triple. Remove the above two triples and introduce a new point $z$ and three new triples $\{x_1,y_1,z\}$, $\{x_2,x_3,z\}$ and $\{y_2,y_3,z\}$ to obtain a $(v+1)_3$ configuration. Configurations which can be constructed in this way from smaller configurations are called \emph{reducible}; others are \emph{irreducible}. Only relatively recently have all connected irreducible configurations $v_3$ been determined~\cite{Boben2007}. When $v$ is odd these are precisely the cyclic configurations of Theorem~\ref{thm:cyclic} as well as the Pappus configuration (one of the three $9_3$ configurations). 
Thus the following corollary is immediate.
\begin{corollary}
	Let $v$ be an odd number with $v\geq 7$. Then any irreducible configuration $v_3$ is upper embeddable in every orientation of triples.
\end{corollary}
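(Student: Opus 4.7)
The corollary is stated as immediate, and indeed the plan is just to combine the classification of irreducible configurations with the two main embeddability results already proved in the paper. The key fact to invoke is the classification from~\cite{Boben2007}, quoted just before the corollary: for odd $v$, every connected irreducible symmetric configuration $v_3$ is either the cyclic configuration generated by $\{0,1,3\}$ or the Pappus configuration (which is one of the three $9_3$ configurations).

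Accordingly, the proof splits into two cases. First, if $\mathcal{X}$ is the cyclic configuration generated by $\{0,1,3\}$, then Theorem~\ref{thm:cyclic} immediately yields that $\mathcal{X}$ is upper embeddable in every orientation. Second, if $\mathcal{X}$ is the Pappus configuration, then $v=9$, so $7 \leq v \leq 19$ and Theorem~\ref{thm:config19} applies, again giving the desired conclusion.

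There is essentially no obstacle: the whole content is absorbed by the classification theorem of Boben, Pisanski, Ravnik-\v{S}umenjak (or whichever attribution is carried by~\cite{Boben2007}) together with the two previously proved results. The only small thing to double-check is that ``irreducible'' in the statement is understood in the connected sense used by~\cite{Boben2007} (reducibility was defined above via the Martinetti construction, which presupposes two disjoint triples inside a single connected configuration), so no extra case for disconnected configurations needs to be handled. Hence the argument reduces to a one-line invocation of Theorem~\ref{thm:cyclic} in the infinite family and a one-line invocation of Theorem~\ref{thm:config19} for the single Pappus exception.
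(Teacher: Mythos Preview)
Your proposal is correct and matches the paper's intended argument exactly: the corollary is stated as immediate from Boben's classification of irreducible $v_3$ configurations for odd $v$, combined with Theorem~\ref{thm:cyclic} for the cyclic family and Theorem~\ref{thm:config19} for the Pappus configuration. The paper gives no separate proof beyond this, so your two-case split is precisely what is meant.
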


\section{Open questions}\label{sec:questions}
We close with some intriguing open questions which these investigations raise. We know that the smallest example of a configuration $v_3$ which is not upper embeddable arises at $v=21$. However we do not know whether this example is unique at $v=21$. We also know from Theorem~\ref{thm:badall} that we may construct arbitrarily large configurations which are not upper embeddable in any orientation. These observations suggest the first question.

\textbf{Question 1}: Is there a unique configuration $21_3$ which is not upper embeddable in any orientation? More generally, is the ``stitching'' construction of Theorems~\ref{thm:bad21} and~\ref{thm:badall} essentially the only way to construct a connected configuration which is not upper embeddable?

A notable feature of our investigations is that the only examples of symmetric configurations $v_3$ which fail to be upper embeddable in \emph{every} orientation are not in fact upper embeddable in \emph{any} orientation. This prompts the following question, which is probably the most fundamental one to arise from this work.

\textbf{Question 2}: If a symmetric configuration $v_3$ is upper embeddable in \emph{one} orientation, is it then necessarily upper embeddable in \emph{every} orientation?

Looking at the Levi graph of the non-embeddable configuration (Figure~\ref{fig:bad21}), it is apparent that it has a very special form. In particular, its edge and vertex connectivity are both 2. One possible avenue for future research would be to try to impose some condition on the Levi graph which would guarantee upper embeddability of the associated configuration.

\textbf{Question 3}: Is there a simple graph-theoretical condition on the Levi graph of a symmetric configuration $v_3$ which would guarantee upper embeddability of the configuration?

\section{Acknowledgements}
The third author acknowledges support from the APVV Research Grants 15-0220 and 17-0428, and the VEGA Research Grants 1/0142/17 and 1/0238/19.

The authors are grateful to G. Brinkmann and T. Pisanski for assistance in verifying the count of configurations on 19 points.

\bibliographystyle{abbrv}

\end{document}